\theoremstyle{theorem}
\theoremstyle{lemma}
\newtheorem*{lemma}{Lemma}
\theoremstyle{definition}
\theoremstyle{remark}
\title{An alternative recursive formula for the sums of powers of integers}
\author[J.L. Cereceda]{Jos\'e Luis Cereceda}
\address{%
        Collado Villalba, 28400 Madrid, Spain}
\email{jl.cereceda@movistar.es}
\begin{document}

\begin{abstract}
In this note, we derive an alternative recursive formula for the sums of powers of integers involving the Stirling numbers of the first kind. As a remarkable by-product, we provide a non-recursive definition of the Catalan numbers.
\end{abstract}

\maketitle

\vspace{-.5cm}

\section{Introduction}

The sums of powers of the first $n$ positive integers $S_p(n) = 1^p + 2^p +\cdots + n^p$,
($p =0,1,2,\ldots\, $) satisfy the fundamental identity
\begin{equation}\label{id1}
1 + \sum_{t=0}^{p} \binom{p+1}{t} S_t(n) = (n+1)^{p+1}, \quad p \geq 0,
\end{equation}
from which we can successively compute $S_0(n)$, $S_1(n)$, $S_2(n)$, etc. Identity \eqref{id1} can easily be proved
by using the binomial theorem; see, e.g., \cite{kelly,owens}. Several variations of \eqref{id1} are also well known
\cite{snow,turner,acu}.

In this note, we derive the following lesser-known recursive formula for $S_p(n)$:
\begin{equation}\label{myeq}
p! + \sum_{t=0}^{p} \begin{bmatrix} p+1 \\ t+1 \end{bmatrix} S_t(n) = p! \binom{n+p+1}{p+1},
 \quad p\geq 0,
\end{equation}
where $\genfrac{[}{]}{0pt}{}{p}{t}$ denote the (unsigned) Stirling numbers of the first kind, also known as the Stirling
cycle numbers (see, e.g., \cite[Chapter 6]{temple}). Table \ref{tb:1} shows the first few rows of the Stirling number triangle.
Although the recursive formula \eqref{myeq} is by no means new, our purpose in dealing with recurrence \eqref{myeq} in this
note is two-fold. On one hand, we aim to provide a new algebraic proof of \eqref{myeq} by making use of two related identities
involving the harmonic numbers. On the other hand, as we will show, using \eqref{myeq} in conjunction with the principle of strong
mathematical induction yields the identity
\begin{table}[ttt]
\centering
\begin{tabular}{l|rrrrrrrrr}
$p \backslash  \, t  $ & 0 & 1 & 2 & 3 & 4 & 5 & 6 & 7 & 8   \\ \hline\hline
0 &  1 &  &  &  &  &  &  &  &   \\
1 &  0 & 1 &  &  &  &  &  &  &   \\
2 &  0 & 1 & 1 &  &  &  &  &  &   \\
3 &  0 & 2 & 3 & 1 &  &  &  &  &   \\
4 &  0 & 6 & 11 & 6 & 1 &  &  &  &   \\
5 &  0 & 24 & 50 & 35 & 10 & 1 &  &  &   \\
6 &  0 & 120 & 274 & 225 & 85 & 15 & 1 &  &  \\
7 &  0 & 720 & 1764 & 1624 & 735 & 175 & 21 & 1 &  \\
8 &  0 & 5040 & 13068 & 13132 & 6769 & 1960 & 322 & 28 & 1  \\ \hline
\end{tabular}\vspace{2mm}
\label{tb:1}
\caption{}
\end{table}
\begin{equation}\label{id3}
\sum_{t=0}^{p} r^t \begin{bmatrix} p+1 \\ t+1 \end{bmatrix} = \frac{(p+r)!}{r!},
\end{equation}
which holds for every $r =1,2,3,\ldots\,\,  $. Interestingly enough, for the special case in which $r =p$, identity \eqref{id3}
leads right away to an explicit formula for the Catalan numbers that seems not to have been noticed hitherto (see \eqref{catalan} below).

\section{Derivation of the recursive formula}

To prove formula \eqref{myeq}, we proceed in two steps. In the first step, we state the identity:
\begin{itemize}
\item {\scshape Identity 1}
\begin{equation*}
\sum_{k=1}^{n} \binom{k+p}{p}H_k = \binom{n+p+1}{p+1}H_n - \frac{1}{(p+1)!}
\sum_{t=0}^{p} \begin{bmatrix} p+1 \\ t+1 \end{bmatrix} S_t(n),
\end{equation*}
\end{itemize}
where $H_k = 1 +\frac{1}{2}+\cdots+\frac{1}{k}$ is the $k$th harmonic number. In the second step, we state the identity:
\begin{itemize}
\item {\scshape Identity 2}
\begin{equation*}
\sum_{k=1}^{n} \binom{k+p}{p}H_k = \binom{n+p+1}{p+1}H_n - \frac{1}{p+1}
\left[ \binom{n+p+1}{p+1} - 1 \right].
\end{equation*}
\end{itemize}
Formula \eqref{myeq} then follows by equating the right-hand sides of Identities 1 and 2.

\subsection{Proof of the Identity 1}

To prove Identity 1, we shall use the following lemma.
\begin{lemma}
For any non-negative integer $p$ and for $t =0,1,\ldots,p+1$, we have
\begin{equation}\label{lem:1}
\sum_{k=t}^{p+1} (-1)^{p+1-k} p^{k-t} \binom{k}{t} \begin{bmatrix} p+1 \\ k \end{bmatrix} =
\begin{bmatrix} p+1 \\ t \end{bmatrix}.
\end{equation}
\end{lemma}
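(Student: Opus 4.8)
The plan is to read both sides of \eqref{lem:1} off the generating polynomial
\[
P(x) \;=\; x(x+1)\cdots(x+p) \;=\; \sum_{k=0}^{p+1} \begin{bmatrix} p+1 \\ k \end{bmatrix} x^{k},
\]
which is the defining generating function of the unsigned Stirling cycle numbers. The one fact I need is the derivative--coefficient dictionary $\frac{1}{t!}P^{(t)}(x)=\sum_{k=t}^{p+1}\binom{k}{t}\begin{bmatrix}p+1\\k\end{bmatrix}x^{k-t}$, which follows at once from $\frac{d^{t}}{dx^{t}}x^{k}=t!\binom{k}{t}x^{k-t}$; in particular $\begin{bmatrix}p+1\\t\end{bmatrix}=\frac{1}{t!}P^{(t)}(0)$.

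First I would absorb the sign and the power into the argument. Since $(-1)^{p+1-k}p^{k-t}=(-1)^{p+1-t}(-p)^{k-t}$, the left-hand side of \eqref{lem:1} equals $(-1)^{p+1-t}\sum_{k=t}^{p+1}(-p)^{k-t}\binom{k}{t}\begin{bmatrix}p+1\\k\end{bmatrix}$, which by the dictionary above is $(-1)^{p+1-t}\frac{1}{t!}P^{(t)}(-p)$. Hence the whole lemma reduces to the single identity $P^{(t)}(-p)=(-1)^{p+1-t}P^{(t)}(0)$.

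That identity I would get from the reflection symmetry of $P$: its roots $0,-1,\dots,-p$ are symmetric about $x=-p/2$, which concretely is the functional equation $P(-p-x)=(-1)^{p+1}P(x)$, checked by pairing the factor $(-p-x+j)=-(x+(p-j))$ as $j$ runs over $0,\dots,p$. Differentiating $t$ times in $x$ multiplies the left side by $(-1)^{t}$, so $(-1)^{t}P^{(t)}(-p-x)=(-1)^{p+1}P^{(t)}(x)$, and setting $x=0$ gives exactly $P^{(t)}(-p)=(-1)^{p+1-t}P^{(t)}(0)$. Substituting this back, the two copies of $(-1)^{p+1-t}$ cancel and the left-hand side of \eqref{lem:1} becomes $\frac{1}{t!}P^{(t)}(0)=\begin{bmatrix}p+1\\t\end{bmatrix}$.

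I expect the only real pitfall to be the sign bookkeeping --- confirming $(-1)^{p+1-k}p^{k-t}=(-1)^{p+1-t}(-p)^{k-t}$ and then carefully tracking the $(-1)^{t}$ that the chain rule produces when differentiating $P(-p-x)$ --- whereas the functional equation and the derivative/coefficient dictionary are routine. As an alternative one could induct on $p$ using $\begin{bmatrix}p+1\\k\end{bmatrix}=\begin{bmatrix}p\\k-1\end{bmatrix}+p\begin{bmatrix}p\\k\end{bmatrix}$, but the generating-function argument avoids the heavier index shuffling.
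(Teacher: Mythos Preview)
Your argument is correct. Both your proof and the paper's rest on the same underlying fact---that the rising factorial $P(x)=x(x+1)\cdots(x+p)$ and the falling factorial $[x+p]_{p+1}$ coincide---but the two proofs package this differently. The paper substitutes $x+p$ into the falling-factorial generating function $[y]_{p+1}=\sum_{k}(-1)^{p+1-k}\genfrac{[}{]}{0pt}{}{p+1}{k}y^{k}$, expands each $(x+p)^{k}$ by the binomial theorem, and then equates the resulting coefficient of $x^{t}$ with the coefficient $\genfrac{[}{]}{0pt}{}{p+1}{t}$ coming from the rising-factorial expansion. You instead read off the same coefficient as a Taylor derivative, $\frac{1}{t!}P^{(t)}(-p)$, and transport it to $x=0$ via the reflection $P(-p-x)=(-1)^{p+1}P(x)$. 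The paper's route is slightly more elementary in that it avoids differentiation and the chain-rule sign; your route makes the symmetry of the root set $\{0,-1,\dots,-p\}$ explicit and is arguably more conceptual. Either way the identity drops out with no induction needed.
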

\begin{proof}
Let $[x]_p$ denote the falling factorial $x(x-1)(x-2)\ldots (x-p+1)$. Recall that the numbers $\genfrac{[}{]}{0pt}{}{p}{t}$
can be defined algebraically by the relation \cite[equation (6.14)]{temple}
\begin{equation*}
[x]_{p} =\sum_{k=0}^{p} (-1)^{p-k} \begin{bmatrix} p \\ k \end{bmatrix} x^{k}.
\end{equation*}
Thus, we can evaluate $[x+p]_{p+1}$ as
\begin{align}\label{lemma1:1}
[x+p]_{p+1} & =\sum_{k=0}^{p+1} (-1)^{p+1-k} \begin{bmatrix} p+1 \\ k \end{bmatrix} (x+p)^{k} \notag \\
&  = \sum_{k=0}^{p+1} (-1)^{p+1-k} \begin{bmatrix} p+1 \\ k \end{bmatrix} \sum_{t=0}^{k}
\binom{k}{t} p^{k-t} x^t  \notag \\
& =\sum_{t=0}^{p+1}\sum_{k=t}^{p+1} (-1)^{p+1-k} p^{k-t} \binom{k}{t} \begin{bmatrix} p+1 \\ k
\end{bmatrix} x^t.
\end{align}
Now, denoting by $[x]^{p}$ the rising factorial $x(x+1)(x+2)\ldots (x+p-1)$, it is clear that $[x+p]_{p+1} = [x]^{p+1}$.
Furthermore, $[x]^{p+1}$ can be expressed as \cite[equation (6.16)]{temple}
\begin{equation}\label{lemma1:2}
[x]^{p+1} = \sum_{t=0}^{p+1} \begin{bmatrix} p+1 \\ t \end{bmatrix} x^t .
\end{equation}
Therefore, equating coefficients of $x^t$ on the right-hand sides of \eqref{lemma1:1} and \eqref{lemma1:2}, we end up
with relation \eqref{lem:1}.
\end{proof}

Next, we proceed with the proof of the Identity 1:
\begin{align}\label{id11}
\sum_{k=1}^{n} \binom{k+p}{p}H_k & = \sum_{k=1}^{n}\sum_{j=1}^{k} \binom{k+p}{p} j^{-1} =
\sum_{j=1}^{n}\sum_{k=j}^{n} \binom{k+p}{p} j^{-1}  \notag \\
& = \sum_{j=1}^{n} j^{-1} \sum_{k=1}^{n} \binom{k+p}{p} - \sum_{j=1}^{n} j^{-1}
\sum_{k=1}^{j-1} \binom{k+p}{p} \notag \\
& = \binom{n+p+1}{p+1} H_n - \sum_{j=1}^{n} \binom{j+p}{p+1} j^{-1},
\end{align}
where in the last step the following ``hockey stick'' identity \cite[problem 1.7.9.]{temple} is used twice:
\begin{equation}\label{stick}
\sum_{k=1}^{n} \binom{k+p}{p} = \binom{n+p+1}{p+1} -1.
\end{equation}
Furthermore, we have
\begin{align*}
\sum_{j=1}^{n} \binom{j+p}{p+1} j^{-1} & = \sum_{j=1}^{n} j^{-1} \frac{1}{(p+1)!}
\sum_{k=0}^{p+1} (-1)^{p+1-k} \begin{bmatrix} p+1 \\ k \end{bmatrix} (j+p)^{k}  \\
& = \frac{1}{(p+1)!} \sum_{t=0}^{p+1}\sum_{k=t}^{p+1} (-1)^{p+1-k} p^{k-t} \binom{k}{t}
\begin{bmatrix} p+1 \\ k \end{bmatrix} \sum_{j=1}^{n} j^{t-1}.
\end{align*}
By relation \eqref{lem:1}, this reduces to
\begin{equation*}
\sum_{j=1}^{n} \binom{j+p}{p+1} j^{-1} = \frac{1}{(p+1)!} \sum_{t=0}^{p+1}
\begin{bmatrix} p+1 \\ t \end{bmatrix} S_{t-1}(n).
\end{equation*}
Since $\genfrac{[}{]}{0pt}{}{p+1}{0} =0$, this is in turn equivalent to
\begin{equation}\label{id12}
\sum_{j=1}^{n} \binom{j+p}{p+1} j^{-1} = \frac{1}{(p+1)!} \sum_{t=0}^{p}
\begin{bmatrix} p+1 \\ t+1 \end{bmatrix} S_{t}(n).
\end{equation}
Finally, combining equations \eqref{id11} and \eqref{id12} gives Identity 1.

\subsection{Proof of the Identity 2}

To prove Identity 2, we employ the following version of Abel's lemma on summation by parts (see, e.g., \cite{chorlton}) which states:

\noindent Let $\{u_k \}_{k \geq 1}$ and $\{v_k \}_{k \geq 1}$ be two sequences of real numbers with partial sums
$U_n = \sum_{k =1}^{n} u_k$ and $V_n = \sum_{k =1}^{n} v_k$. Further define $U_0 = V_0 =0$. Then
\begin{equation}\label{abel}
\sum_{k=1}^{n} u_k V_k + \sum_{k=1}^{n} v_k U_{k-1} = U_n V_n .
\end{equation}

Hence, letting
\begin{equation*}
\begin{aligned}
v_k &= \frac{1}{k},  &  V_k & = \sum_{j=1}^{k} \frac{1}{j} = H_k,  \\
u_k & = \binom{k+p}{p}, &  U_k & = \sum_{j=1}^{k} \binom{j+p}{p} = \binom{k+p+1}{p+1} -1,
\end{aligned}
\end{equation*}
(where we use \eqref{stick} to get the closed form of $U_k$), and plugging into \eqref{abel}, we obtain
\begin{equation}\label{lemma2:1}
\sum_{k=1}^{n} \binom{k+p}{p}H_k = \binom{n+p+1}{p+1} H_n -
\sum_{k=1}^{n} \frac{1}{k}\binom{k+p}{p+1}.
\end{equation}
Furthermore, noting that $\binom{n}{k+1} = \frac{n-k}{k+1}\binom{n}{k}$, and using \eqref{stick} again,
it follows that
\begin{equation}\label{lemma2:2}
\sum_{k=1}^{n}\frac{1}{k}\binom{k+p}{p+1} = \frac{1}{p+1} \sum_{k=1}^{n} \binom{k+p}{p}
= \frac{1}{p+1} \left[ \binom{n+p+1}{p+1} - 1 \right],
\end{equation}
and thus, from equations \eqref{lemma2:1} and \eqref{lemma2:2}, we obtain Identity 2.

\section{Catalan numbers enter the scene}

First we note that formula \eqref{myeq} can be written in the equivalent form
\begin{equation}\label{cat1}
\sum_{t=0}^{p} r^t  \begin{bmatrix} p+1 \\ t+1 \end{bmatrix} =p! \left[ \binom{r+p+1}{p+1}
- 1 \right] - \sum_{j=1}^{r-1}\sum_{t=0}^{p} j^t \begin{bmatrix} p+1 \\ t+1 \end{bmatrix},
\end{equation}
where $r$ stands for any arbitrary fixed positive integer. In particular, for $r =1$ we retrieve the well-known
relation for the Stirling cycle numbers \cite[equation (6.18)]{temple}
\begin{equation}\label{bcase}
\sum_{t=0}^{p} \begin{bmatrix} p+1 \\ t+1 \end{bmatrix} = (p+1)!,
\end{equation}
which constitutes the base case of the above identity \eqref{id3}. (Note, incidentally, that relation \eqref{bcase}
also follows directly by letting $x=1$ in equation \eqref{lemma1:2}, since $[x]^{p+1} = (p+1)!$ for $x=1$.)
Let us assume as a strong inductive hypothesis that
\begin{equation}\label{cat2}
\sum_{t=0}^{p} j^t \begin{bmatrix} p+1 \\ t+1 \end{bmatrix} = \frac{(p+j)!}{j!},
\end{equation}
for $j=1,2,\ldots, r-1$. Thus, substituting \eqref{cat2} into \eqref{cat1}, we have
\begin{align*}
\sum_{t=0}^{p} r^t  \begin{bmatrix} p+1 \\ t+1 \end{bmatrix} & = p! \left[ \binom{r+p+1}{p+1}
- 1 \right] - \sum_{j=1}^{r-1} \frac{(p+j)!}{j!}  \\
& = p! \left[ \binom{r+p+1}{p+1} - 1 \right] - p! \sum_{j=1}^{r-1} \binom{p+j}{j} .
\end{align*}
Since $\binom{p+j}{j} = \binom{j+p}{p}$, we invoke \eqref{stick} once more to get
\begin{equation*}
\sum_{t=0}^{p} r^t  \begin{bmatrix} p+1 \\ t+1 \end{bmatrix}= p! \left[ \binom{r+p+1}{p+1}
- \binom{r+p}{p+1} \right] = \frac{(p+r)!}{r!}.
\end{equation*}
This completes the inductive step and the proof of the identity \eqref{id3}.

Observe that, for $r=p$, the said identity becomes
\begin{equation*}
\sum_{t=0}^{p} p^t \begin{bmatrix} p+1 \\ t+1 \end{bmatrix} = (p+1)! C_p,
\end{equation*}
where $C_p = \frac{1}{p+1}\binom{2p}{p}$ is the $p$th Catalan number \cite{koshy}. Expressing $C_p$ as
\begin{equation}\label{catalan}
C_p = \frac{\displaystyle \sum_{t=1}^{p+1} p^{t-1} \begin{bmatrix} p+1 \\ t \end{bmatrix}}
{\displaystyle \sum_{t=1}^{p+1} \begin{bmatrix} p+1 \\ t \end{bmatrix}},  \quad\,\,  p \geq 1,
\end{equation}
we can therefore interpret $C_p$ as the average of the function $p^{t-1}$ over all \mbox{$(p+1)!$} permutations
of $p+1$ elements, with $t$ being the number of cycles of a permutation, and $\genfrac{[}{]}{0pt}{}{p+1}{t}$
the number of permutations of $p+1$ elements with exactly $t$ cycles. As an illustrative example, let us apply
\eqref{catalan} to calculate $C_6$. By using the entries in the seventh row of Table \ref{tb:1}, from \eqref{catalan}
we readily obtain
\begin{align*}
C_6 & = \frac{1}{7!}\left( 6^0 \!\begin{bmatrix} 7 \\ 1 \end{bmatrix} + 6^1 \!\begin{bmatrix} 7 \\ 2 \end{bmatrix}
+ 6^2 \!\begin{bmatrix} 7 \\ 3 \end{bmatrix} + 6^3 \!\begin{bmatrix} 7 \\ 4 \end{bmatrix}
+ 6^4 \!\begin{bmatrix} 7 \\ 5 \end{bmatrix} + 6^5 \!\begin{bmatrix} 7 \\ 6 \end{bmatrix}
+ 6^6 \!\begin{bmatrix} 7 \\ 7 \end{bmatrix} \right) \\
& = \frac{1}{5040} \left( 6^0 \cdot 720 + 6^1 \cdot 1764 + 6^2 \cdot 1624  \right. \\[-.2cm]
& \left. \qquad\qquad\,\,\, + \,\, 6^3 \cdot 735  + 6^4 \cdot 175 +6^5 \cdot  21 + 6^6  \cdot 1 \right)  =
\frac{665280}{5040} = 132.
\end{align*}

\vspace{.5cm}

{\it Acknowledgments:} The author is grateful to Thomas Koshy for his encouragement, and the anonymous reviewer for the constructive suggestions for improving an earlier version of the manuscript.

\vspace{.5cm}

\end{document}